\newtheoremstyle{kai}
{3pt} {3pt} {} {} {\bfseries} {.} {.5em} {}
\makeatletter \setcounter{page}{1}
\def\EquationsBySection{\def\theequation
{\thesection.\arabic{equation}}%
\@addtoreset{equation}{section}}
\newcommand\old[1]{}
\newcommand{\pend}{\hfill \thicklines \framebox(6.6,6.6)[l]{}}
\renewenvironment{proof}{\noindent {\it  Proof.} \rm}{\pend}
\newtheorem{theorem}{Theorem}[section]
\newtheorem{lemma}{Lemma}[section]
\newtheorem{remark}{Remark}[section]
\newtheorem{example}{Example}[section]
\begin{document}
\pagestyle{plain}
\title
{\bf Comparison Theorem for  Stochastic Differential
Delay Equations with Jumps}
\author{Jianhai Bao$^{a,b}$ and  Chenggui Yuan$^{b,}$\thanks{ E-mail address: C.Yuan@swansea.ac.uk} \\
\vspace{0.2cm} \small{\it $^a$School of Mathematics, Central South
University, Changsha, Hunan 410075, PR China
}\\
\vspace{0.2cm} \small{\it $^b$Department of Mathematics, University
of
Wales Swansea, Swansea SA2 8PP, UK}\\
}

\date{}
\maketitle

\begin{abstract}{\rm
In this paper we  establish a comparison theorem for
 stochastic differential {\it delay } equations with jumps. An
example is constructed to demonstrate that the comparison theorem
need not hold whenever the diffusion term contains a delay function
although the jump-diffusion coefficient could contain a delay
function. Moreover, another example is established to show that the
comparison theorem is not necessary to be true provided that the
jump-diffusion term is non-increasing with respect to the delay
variable.
}\\

\noindent {\bf Keywords:} Comparison theorem; Stochastic differential delay equation; Tanaka-type formula; Jumps. \\
\noindent{\bf 2000 Mathematics Subject Classification:} \ 39A11,
37H10.
\end{abstract}
\noindent

\section{Introduction}

For most of the practical cases, the dynamical systems will be
disturbed by some stochastic perturbation \cite{st05}. One type of
stochastic perturbation is continuous and can be modeled by
stochastic integral with respect to the continuous martingale, e.g.,
Brownian motion. Non-Gaussian random processes also play an important
role in modelling stochastic dynamical systems (see, for example,
Applebaum \cite{a04}, Situ \cite{st05}, Peszat and Zabczyk
\cite{pz06}). Typical examples of non-Gaussian stochastic processes
are L\'{e}vy processes and processes arising by Poisson random
measures. In \cite{w01}, Woyczy\'{n}ski describes a number of
phenomena from fluid mechanics, solid state physics, polymer
chemistry, economic science, etc., for which non-Gaussian L\'{e}vy
processes can be used as their mathematical model in describing the
related probability behaviour. On the other hand, control
engineering intuition suggests that time-delays are common in
practical systems and are often the cause of instability and/or poor
performance \cite{ymy08}. Moreover, it is usually difficult to
obtain accurate values for the delay and conservative estimates
often have to be used. The importance of time delay has already
motivated by several studies on the stability of stochastic
diffusion with time delay (e.g., \cite{csl98} and \cite{99p}).

In the past few years, comparison theorems for two stochastic
differential equations (SDEs) have received a lot of attention,
for example, Anderson \cite{a72}, Gal'cuk and Davis \cite{gal}, Ikeda and Watanable \cite{iw77}, Mao
\cite{m91}, O'Brien \cite{obr},  Yamada \cite{y73}, Yan \cite{yan}  and references therein.
%In particular, Situ \cite[Theorem
%p295]{st05} provides by making use of Tanaka-type formula one
%comparison theorem for SDEs with jumps;
Recently, the comparison theorem has made a great development  that
Peng and Zhu \cite{pz06} obtain a necessary and
sufficient condition for comparison theorem of SDEs with jumps by applying a
criteria of ``viability condition",
 Peng and Yang \cite{py}
give a comparison theorem for anticipated backward stochastic
differential equations, and  for a class of  SDEs with delay, Yang, Mao and Yuan
\cite{ymy08} also establish a  comparison theorem.

In this paper we shall establish a comparison theorem for
 stochastic differential {\it delay } equations
(SDDEs) with jumps. It should be pointed out that the approach of this paper
is inspired by Peng and Yang \cite{py}, Peng and Zhu \cite{pz06} and
Yang, Mao and Yuan \cite{ymy08}. We  construct an example, which demonstrates that
the comparison theorem need not hold whenever the diffusion term
contains a delay function although the jump-diffusion coefficient
could contain a delay function just as Example \ref{jump} below
shows. Moreover, another example, Example \ref{example}, is
established to show that the comparison theorem is not necessary to
be true provided that the jump-diffusion term is non-increasing with
respect to the delay variable.

The organization of this paper goes as follows: In Section $2$ we
establish a comparison theorem for two one-dimensional
 SDDEs with pure jumps,  and similar comparison
results are given  for  SDDEs with compensator jump processes in
Section $3$.

\section{Comparison Theorem for  SDDEs with Pure Jumps}

Let $W(t), t\geq0$, be a real-valued Wiener process defined on a
certain probability space $(\Omega, \mathcal {F}, \mathbb{P})$
equipped with a filtration ${\{\mathcal {F}_t}\}_{t\geq0}$
satisfying the usual conditions (i.e., it is right continuous and
$\mathcal {F}_0$ contains all $\mathbb{P}$-null sets), and
$N(\cdot,\cdot)$ is a Poisson counting process with characteristic
measure $\lambda$ on measurable subset $\mathbb{Y}$ of $[0,\infty)$
with $\lambda(\mathbb{Y})<\infty$,
$\tilde{N}(dt,du):=N(dt,du)-\lambda(du)dt$ is a compensator
martingale process. Let $\tau>0$ and denote
$D([-\tau,0];\mathbb{R})$ the space of all c\`{a}dl\`{a}g paths from
$[-\tau,0]$ into $\mathbb{R}$ with the norm $\|u\|:=\sup_{-\tau\leq
\theta\leq 0}|u(\theta)|$. Throughout this paper, we assume that
$W(t)$ and $N(dt,du)$ are independent.

Fix $T>0$ and consider SDDE with jumps for $t\in[0,T]$
\begin{equation}\label{eq37}
\begin{split}
dX(t)&=f(X(t),X(t-\tau),t)dt+g(X(t),X(t-\tau),t)dW(t)\\
&+\int_{\mathbb{Y}}\gamma(X(t),X(t-\tau),t)\tilde{N}(dt,du)
\end{split}
\end{equation}
with initial condition $X(\theta)=\xi(\theta)\in
D([-\tau,0];\mathbb{R})$. Assume that there exist positive constants
$L_n$ such that
\begin{equation}\label{eq35}
\begin{split}
&|f(x_1,y_1,t)-f(x_2,y_2,t)|^2+|g(x_1,y_1,t)-g(x_2,y_2,t)|^2\\
&+\int_{\mathbb{Y}}|\gamma(x_1,y_1,t)-\gamma(x_2,y_2,t)|^2\lambda(du)\leq
L_n(|x_1-x_2|^2+|y_1-y_2|^2)
\end{split}
\end{equation}
for any $x_1,x_2,y_1,y_2\in\mathbb{R}$ with
$|x_1|\vee|x_2|\vee|y_1|\vee|y_2|\leq n$ and there exists a constant
$L>0$ such that for any $x,y\in\mathbb{R}$
\begin{equation}\label{eq36}
|f(x,y,t)|^2+|g(x,y,t)|^2+\int_{\mathbb{Y}}|\gamma(x,y,t,u)|^2\lambda(du)\leq
L(1+|x|^2+|y|^2).
\end{equation}

By the standard Banach fixed point theorem and truncation approach,
the following existence and uniqueness result can be found.
\begin{lemma}\label{existence}
Under conditions \eqref{eq35} and \eqref{eq36}, for initial
condition $\mathbb{E}\|\xi\|^2<\infty$, Eq. \eqref{eq37} has a
unique solution $X(t),t\in[0,T]$, with property
$\mathbb{E}\sup_{-\tau\leq t\leq T}|X(t)|^2<\infty$.
\end{lemma}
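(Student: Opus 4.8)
The plan is to establish existence and uniqueness for the SDDE with jumps by combining the classical Banach fixed-point (Picard iteration) argument on a short time interval with a truncation procedure that localizes the coefficients so that the global Lipschitz-type estimate \eqref{eq35} applies. First I would reduce to the globally Lipschitz case by defining, for each $n$, truncated coefficients $f_n(x,y,t) = f((x\wedge n)\vee(-n), (y\wedge n)\vee(-n), t)$ and similarly $g_n, \gamma_n$; by \eqref{eq35} these satisfy a global Lipschitz condition with constant $L_n$, and by \eqref{eq36} they retain the linear growth bound. Under a genuinely global Lipschitz condition, the standard contraction argument applies.

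For the globally Lipschitz truncated equation I would set up Picard iteration: put $X^{(0)}(t)=\xi(0)$ for $t\in[0,T]$ and $X^{(0)}(\theta)=\xi(\theta)$ on $[-\tau,0]$, and define $X^{(k+1)}$ by plugging $X^{(k)}$ into the right-hand side integral operator of \eqref{eq37}. The key estimates are the It\^{o} isometry for the Brownian integral and the corresponding isometry for the compensated Poisson integral, namely $\E\big|\int_0^t\int_{\mathbb{Y}}\gamma(\cdots)\tilde N(ds,du)\big|^2 = \E\int_0^t\int_{\mathbb{Y}}|\gamma(\cdots)|^2\lambda(du)\,ds$, together with the Burkholder--Davis--Gundy inequality to control the supremum norm. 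Using these, the Lipschitz bound \eqref{eq35}, and the Cauchy--Schwarz inequality on the drift term, I would derive a recursive inequality of the form
\begin{equation*}
\E\sup_{0\leq s\leq t}|X^{(k+1)}(s)-X^{(k)}(s)|^2 \leq C\int_0^t \E\sup_{0\leq r\leq s}|X^{(k)}(r)-X^{(k-1)}(r)|^2\,ds,
\end{equation*}
where $C$ depends on $L_n$, $T$, and the BDG constant. Iterating gives the summability of the increments in $L^2$, hence $\{X^{(k)}\}$ is Cauchy in the Banach space of $\mathcal{F}_t$-adapted c\`{a}dl\`{a}g processes with norm $(\E\sup_{-\tau\leq t\leq T}|\cdot|^2)^{1/2}$, and its limit $X$ solves the truncated equation; uniqueness follows from the same Gronwall-type estimate applied to two solutions.

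To pass from the truncated solutions back to \eqref{eq37}, I would define the stopping time $\sigma_n = \inf\{t\geq 0 : |X(t)|\geq n\}$ for the solution $X_n$ of the $n$-truncated equation, observe that $X_n$ and $X_m$ agree up to $\sigma_n\wedge\sigma_m$ by uniqueness (since the truncated coefficients coincide below the truncation level), and show $\sigma_n\to T$ almost surely. The latter is guaranteed by an a priori moment bound $\E\sup_{-\tau\leq t\leq T}|X(t)|^2<\infty$ obtained directly from the linear growth condition \eqref{eq36} via It\^{o}'s formula and Gronwall's inequality; this bound both furnishes the stated integrability property and prevents explosion, so that the consistent family $\{X_n\}$ patches together into a global solution on $[0,T]$. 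The main obstacle I anticipate is the careful handling of the jump integral: one must apply the correct It\^{o} isometry and moment inequalities for the compensated Poisson integral (rather than the Brownian tools) and control the supremum of the jump martingale via BDG, ensuring the constants remain uniform so the contraction and the a priori estimate close properly.
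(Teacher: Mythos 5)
Your proposal is correct and follows exactly the route the paper indicates, since the paper gives no detailed proof but explicitly attributes Lemma \ref{existence} to ``the standard Banach fixed point theorem and truncation approach'': Picard iteration under a global Lipschitz condition (with the It\^o/compensated-Poisson isometries and BDG to close the contraction), followed by truncation and stopping times, with non-explosion supplied by the linear growth bound \eqref{eq36} via Gronwall. Your handling of the jump integral and the patching of the truncated solutions is the standard and correct way to fill in the details the paper omits.
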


In order to state our main results, we need the following Lemma.

\begin{lemma}\label{lemma1}
Consider two one-dimensional SDEs with jumps for any $t\in[0,T]$
\begin{equation}\label{eq38}
\begin{split}
X_1(t)&=x_1+\int_0^tf_1(X_1(s),s)ds+\int_0^tg(X_1(s),s)dW(s)\\
&+\int_0^t\int_{\mathbb{Y}}\gamma_1(X_1(s^-),s,u)N(ds,du)
\end{split}
\end{equation}
and
\begin{equation}\label{eq39}
\begin{split}
X_2(t)&=x_2+\int_0^tf_2(X_2(s),s)ds+\int_0^tg(X_2(s),s)dW(s)\\
&+\int_0^t\int_{\mathbb{Y}}\gamma_2(X_2(s^-),s,u)N(ds,du).
\end{split}
\end{equation}
Assume that there exists a constant $L>0$ such that for any
$x,y\in\mathbb{R}$ and $t\in[0,T]$
\begin{equation}\label{eq14}
|f_i(x,t)-f_i(y,t)|^2+|g(x,t)-g(y,t)|^2+\int_{\mathbb{Y}}|\gamma_i(x,t,u)-\gamma_i(y,t,u)|^2\lambda(du)\leq
L|x-y|^2
\end{equation}
for $i=1,2$ with $\mathbb{E}\sup_{0\leq t\leq
T}\left(|f_i(0,t)|^2+|g(0,t)|^2+\int_{\mathbb{Y}}|\gamma_i(0,t,u)|^2\lambda(du)\right)<\infty$
and
\begin{equation}\label{eq13}
 f_1(x,t)\geq f_2(x,t) \mbox{ and } \gamma_1(x,t,u)\geq\gamma_2(x,t,u), t\in[0,T],
u\in\mathbb{Y}.
\end{equation}
Moreover assume that for any $x,y\in\mathbb{R}$ and $u\in\mathbb{Y}$
\begin{equation}\label{eq15}
x+\gamma_1(x,t,u)\leq y+\gamma_1(y,t,u) \mbox{ whenever } x\leq y.
\end{equation}
Then we have
\begin{equation}\label{eq40}
X_1(t)\geq X_2(t), \forall t\in[0,T], \mbox{ a.s. provided that }
x_1\geq x_2.
\end{equation}
\end{lemma}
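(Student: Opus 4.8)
The plan is to study the positive part of the difference $Y(t):=X_2(t)-X_1(t)$ and to show, via a Tanaka-type formula for jump semimartingales, that $\E(Y(t))^+=0$ for every $t\in[0,T]$. Since $x_1\geq x_2$ gives $(Y(0))^+=0$, and the c\`adl\`ag property of the paths then upgrades the pointwise a.s. statement to one holding for all $t$ simultaneously, this yields \eqref{eq40}. First I would subtract \eqref{eq39} from \eqref{eq38} to obtain the semimartingale decomposition of $Y$, whose continuous martingale part is $M^c(t)=\int_0^t[g(X_2(s),s)-g(X_1(s),s)]dW(s)$ and whose jumps are driven by $N(ds,du)$. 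Applying the Meyer--It\^o/Tanaka formula to the convex map $y\mapsto y^+$ produces three contributions: a term $\int_0^t\mathbf{1}_{\{Y(s^-)>0\}}dY(s)$, a local-time term $\tfrac12 L_t^0$ of $M^c$ at level zero, and a jump-correction sum; the diffusion piece is a martingale vanishing in expectation.

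The crucial simplifications come next. Because both equations share the \emph{same} diffusion coefficient $g$, the Lipschitz bound \eqref{eq14} gives $d\langle M^c\rangle_s\leq L\,Y(s)^2\,ds$, so on $\{0\leq Y(s)<\epsilon\}$ the occupation density is $O(\epsilon^2)$ and the standard $\epsilon^{-1}$ limit defining $L_t^0$ forces $L_t^0=0$; this is exactly where identical diffusion coefficients are indispensable. For the drift, on $\{Y(s)>0\}$ I would write $f_2(X_2(s),s)-f_1(X_1(s),s)=[f_2(X_2(s),s)-f_2(X_1(s),s)]+[f_2(X_1(s),s)-f_1(X_1(s),s)]$, bound the first bracket by $\sqrt{L}\,(Y(s))^+$ using \eqref{eq14}, and discard the second using $f_1\geq f_2$ from \eqref{eq13}.

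The main obstacle is the jump term. Combining the jump part of $\int_0^t\mathbf{1}_{\{Y(s^-)>0\}}dY$ with the correction sum collapses everything to $\int_0^t\int_{\mathbb{Y}}\Psi(s,u)N(ds,du)$, where $\Psi(s,u)=(Y(s^-)+\gamma_2(X_2(s^-),s,u)-\gamma_1(X_1(s^-),s,u))^+-(Y(s^-))^+$; writing $N=\tilde{N}+\lambda(du)ds$ discards a martingale and leaves the compensator to be controlled. The analysis splits on the sign of $Y(s^-)$. Setting $a=X_1(s^-)$ and $b=X_2(s^-)$, when $Y(s^-)\leq 0$ (the ordering currently holds) I would use $\gamma_1\geq\gamma_2$ from \eqref{eq13} together with the monotonicity condition \eqref{eq15}, namely $x+\gamma_1(x,s,u)\leq y+\gamma_1(y,s,u)$ for $x\leq y$, to get $b+\gamma_2(b,s,u)\leq b+\gamma_1(b,s,u)\leq a+\gamma_1(a,s,u)$, so the post-jump difference stays nonpositive and $\Psi=0$: this is precisely the role of \eqref{eq15}, guaranteeing that a jump never creates a new violation.

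When $Y(s^-)>0$, cancelling the nonpositive term $\gamma_2(a,s,u)-\gamma_1(a,s,u)$ and using the $1$-Lipschitz property of $y\mapsto y^+$ give $\Psi(s,u)\leq|\gamma_2(b,s,u)-\gamma_2(a,s,u)|$, whence Cauchy--Schwarz with \eqref{eq14} bounds its $\lambda$-integral by $\sqrt{L\lambda(\mathbb{Y})}\,(Y(s))^+$. Adding the surviving pieces yields $\E(Y(t))^+\leq(\sqrt{L}+\sqrt{L\lambda(\mathbb{Y})})\int_0^t\E(Y(s))^+\,ds$, and Gronwall's inequality then forces $\E(Y(t))^+\equiv 0$. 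The integrability required to drop the martingale terms (genuine martingale property of the $dW$ and $\tilde{N}$ integrals) follows from the second-moment bound $\E\sup_{0\leq t\leq T}|X_i(t)|^2<\infty$ of Lemma \ref{existence} via a routine localization by stopping times, after which one passes to the limit.
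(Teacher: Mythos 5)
Your proof is correct and follows essentially the same route as the paper's: apply a Tanaka-type formula to $(X_2(t)-X_1(t))^+$, kill the jump correction on $\{Y(s^-)\le 0\}$ via \eqref{eq13} and \eqref{eq15}, bound the surviving drift and jump pieces through the Lipschitz condition \eqref{eq14} and Cauchy--Schwarz, and finish with Gronwall. The only real difference is that the paper invokes Situ's Tanaka-type formula (Theorem 152) as a black box, which already contains no local-time term, whereas you derive that fact yourself from the general Meyer--It\^o formula via the occupation-density argument $d\langle M^c\rangle_s\le L\,Y(s)^2\,ds$ forcing $L_t^0=0$ (and you use the Lipschitz constants of $f_2,\gamma_2$ where the paper adds and subtracts $f_1(X_2)$ and $\gamma_1(X_2^-)$ --- an immaterial symmetry).
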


\begin{proof}
By Lemma \ref{existence}, both Eq. \eqref{eq38} and Eq. \eqref{eq39}
have unique solutions, respectively. Applying the Tanaka-type
formula \cite[Theorem 152, p120]{st05}, we have for any $t\in[0,T]$
\begin{equation*}
\begin{split}
(X_2(t)-X_1(t))^+&=(x_2-x_1)^++\int_0^tI_A[f_2(X_2(s),s)-f_1(X_1(s),s)]ds\\
&+\int_0^tI_A[g(X_2(s),s)-g(X_1(s),s)]dW(s)\\
&+\int_0^t\int_{\mathbb{Y}}[(X_2(s^-)-X_1(s^-)+\gamma_2(X_2(s^-),s,u)-\gamma_1(X_1(s^-),s,u))^+\\
&-(X_2(s^-)-X_1(s^-))^+]N(ds,du)\\
&\leq\int_0^tI_A[(f_1(X_2(s),s)-f_1(X_1(s),s))+(f_2(X_2(s),s)-f_1(X_2(s),s))]ds\\
&+\int_0^tI_A[g(X_2(s),s)-g(X_1(s),s)]dW(s)\\
&+\int_0^t\int_{\mathbb{Y}}I_A(\gamma_1(X_2(s^-),s,u)-\gamma_1(X_1(s^-),s,u))N(ds,du)\\
&+\int_0^t\int_{\mathbb{Y}}[(X_2(s^-)-X_1(s^-)+\gamma_1(X_2(s^-),s,u)-\gamma_1(X_1(s^-),s,u)\\
&+\gamma_2(X_2(s^-),s,u)-\gamma_1(X_2(s^-),s,u))^+
-(X_2(s^-)-X_1(s^-))^+\\
&-I_A(\gamma_1(X_2(s^-),s,u)-\gamma_1(X_1(s^-),s,u))]N(ds,du),
\end{split}
\end{equation*}
in which $A:=\{X_2(s)-X_1(s)>0\}$ and the second inequality is due
to $x_1\geq x_2$. Noting by \eqref{eq13} that
\begin{equation*}
f_2(X_2(s),s)-f_1(X_2(s),s))\leq0 \mbox{ and }
\gamma_2(X_2(s^-),s,u)-\gamma_1(X_2(s^-),s,u)\leq0
\end{equation*}
and taking expectations, we obtain
\begin{equation*}
\begin{split}
\mathbb{E}(X_2(t)-X_1(t))^+
&\leq\mathbb{E}\int_0^tI_A[f_1(X_2(s),s)-f_1(X_1(s),s)]ds\\
&+\mathbb{E}\int_0^t\int_{\mathbb{Y}}I_A(\gamma_1(X_2(s^-),s,u)-\gamma_1(X_1(s^-),s,u))\lambda(du)ds\\
&+\mathbb{E}\int_0^t\int_{\mathbb{Y}}[(X_2(s^-)-X_1(s^-)+\gamma_1(X_2(s^-),s,u)-\gamma_1(X_1(s^-),s,u))^+
\\&-(X_2(s^-)-X_1(s^-))^+
-I_A(\gamma_1(X_2(s^-),s,u)-\gamma_1(X_1(s^-),s,u))]N(ds,du).
\end{split}
\end{equation*}
On the other hand, thanks to \eqref{eq15}, it follows that
\begin{equation*}
\begin{split}
&\mathbb{E}\int_0^t\int_{\mathbb{Y}}[(X_2(s^-)-X_1(s^-)+\gamma_1(X_2(s^-),s,u)-\gamma_1(X_1(s^-),s,u))^+
-(X_2(s^-)-X_1(s^-))^+\\
&-I_A(\gamma_1(X_2(s^-),s,u)-\gamma_1(X_1(s^-),s,u))]N(ds,du)\leq0.
\end{split}
\end{equation*}
Hence, taking into account \eqref{eq14}
\begin{equation*}
\begin{split}
\mathbb{E}(X_2(t)-X_1(t))^+&\leq
(1+\lambda^{\frac{1}{2}}(\mathbb{Y}))L^{\frac{1}{2}}\mathbb{E}\int_0^tI_{\{X_2(s)-X_1(s)>0\}}|X_2(s)-X_1(s)|ds\\
&=(1+\lambda^{\frac{1}{2}}(\mathbb{Y}))L^{\frac{1}{2}}\mathbb{E}\int_0^t(X_2(s)-X_1(s))^+ds.
\end{split}
\end{equation*}
This, in addition to Gonwall's inequality, implies
$\mathbb{E}(X_2(t)-X_1(t))^+=0$ and then yields $X_2(t)\leq X_1(t),
t\in[0,T]$, a.s. due to the fact that $(X_2(t)-X_1(t))^+$ is a
nonnegative random variable for fixed $t$, as required.
\end{proof}

\begin{remark}
{\rm  Peng and Zhu \cite[Theorem 3.1]{pz06} obtain a necessary and sufficient
condition of comparison theorem for two one-dimensional SDEs
driven by compensator jump processes such that
\begin{equation}\label{eq41}
\begin{split}
X_1(t)&=x_1+\int_0^tf_1(X_1(s),s)ds+\int_0^tg_1(X_1(s),s)dW(s)\\
&+\int_0^t\int_{\mathbb{Y}}\gamma_1(X_1(s^-),s,u)\tilde{N}(ds,du)
\end{split}
\end{equation}
and
\begin{equation}\label{eq42}
\begin{split}
X_2(t)&=x_2+\int_0^tf_2(X_2(s),s)ds+\int_0^tg_2(X_2(s),s)dW(s)\\
&+\int_0^t\int_{\mathbb{Y}}\gamma_2(X_2(s^-),s,u)\tilde{N}(ds,du).
\end{split}
\end{equation}
$X_1(t)\ge X_2(t)$ if and only if
\begin{equation*}
f_1(x,t)\geq f_2(x,t), g_1(x,t)= g_2(x,t),
\gamma_1(x,t,u)=\gamma_2(x,t,u)
\end{equation*}
as well as \eqref{eq15} holds. For Eq. \eqref{eq38} and Eq.
\eqref{eq39} which are driven by pure jump processes, we consider the
comparison result in Lemma \ref{lemma1}, where it is not necessary
to impose $\gamma_1=\gamma_2$. Clearly, Eq. \eqref{eq38} and Eq.
\eqref{eq39} can be easily transformed to Eq. \eqref{eq41} and Eq.
\eqref{eq42}, respectively. However, we shall use this Lemma to  establish a comparison theorem for SDDEs driven by jump processes.}
\end{remark}

 In the work \cite{ymy08}, where
comparison theorem of one-dimensional stochastic hybrid delay
systems is studied, a very suggestive example (Example 3.3) shows
that the comparison theorem need not hold whenever the diffusion
terms contain a delay function. While for stochastic delay systems
with jumps, the following example demonstrates that the
jump-diffusion terms could contain a delay function.

\begin{example}\label{jump}
{\rm Consider the following two one-dimensional SDEs with jumps
\begin{equation}\label{eq1}
\begin{cases}
X(t)&=c+\int_0^t\int_{-\infty}^{\infty}\gamma(u)X(s-\tau)\tilde{N}(ds,du),t\in[0,T];\\
X(\theta)&=c,\theta\in[-\tau,0)
\end{cases}
\end{equation}
and
\begin{equation}\label{eq2}
\begin{cases}
Y(t)&=\int_0^t\int_{-\infty}^{\infty}\gamma(u)Y(s-\tau)\tilde{N}(ds,du),t\in[0,T];\\
Y(\theta)&=0,\theta\in[-\tau,0)
\end{cases}
\end{equation}
where $c<0$ is a constant. We further assume that
\begin{equation}\label{eq3}
\gamma(u)>0, u\in(-\infty,\infty)
\end{equation}
and
\begin{equation}\label{eq4}
 \mbox{ and }
\tau\int_{-\infty}^{\infty}\gamma(u)\lambda(du)<1.
\end{equation}
For any $t\in[0,\tau]$
\begin{equation}\label{eq5}
\begin{split}
X(t)&=c\left(1+\int_0^t\int_0^{\infty}\gamma(u)\tilde{N}(ds,du)\right)\\
&=c\left(1+\int_0^t\int_0^{\infty}\gamma(u)N(ds,du)-\int_0^t\int_0^{\infty}\gamma(u)\lambda(du)ds\right).
\end{split}
\end{equation}
By \eqref{eq3}, combining the definition of stochastic calculus with
jumps, it follows that for $t\in[0,\tau]$
\begin{equation*}
\int_0^t\int_0^{\infty}\gamma(u)N(ds,du)>0
\end{equation*}
and
\begin{equation*}
-\int_0^t\int_0^{\infty}\gamma(u)\lambda(du)ds\geq-\int_0^{\tau}\int_0^{\infty}\gamma(u)\lambda(du)ds=-\tau\int_{-\infty}^{\infty}\gamma(u)\lambda(du).
\end{equation*}
Hence, together with \eqref{eq4}, in \eqref{eq5} $X(t)<0$ while
$Y(t)\equiv0$ for $t\in[0,\tau]$. As a consequence, we could derive
the following comparison result: the solutions $X(t)$ of Eq.
\eqref{eq1} and $Y(t)$ of Eq. \eqref{eq2} obey the property for
$t\in[0,\tau]$
\begin{equation*}
X(t)\leq Y(t) \mbox{ a.s. }
\end{equation*}}
\end{example}

Motivated by \cite[Example 3.3]{ymy08} we could also establish an
example to show that, for stochastic delay systems with jumps, the
comparison theorem need not hold if the diffusion term contains a
delay function.
\begin{example}
{\rm Consider the following two one-dimensional equations
\begin{equation}\label{eq6}
\begin{cases}
X(t)&=c+\int_0^tX(s-\tau)dB(s)-\int_0^tX(s-\tau)dN(s),t\in[0,T];\\
X(\theta)&=c,\theta\in[-\tau,0)
\end{cases}
\end{equation}
and
\begin{equation}\label{eq7}
\begin{cases}
Y(t)&=\int_0^tY(s-\tau)dB(s)-\int_0^tY(s-\tau)dN(s),t\in[0,T];\\
Y(\theta)&=0,\theta\in[-\tau,0)
\end{cases}
\end{equation}
where $c<0$ is a constant, $N$ is a Poisson process and independent
of Brownian motion $B$. Clearly, for any $t\in[0,\tau]$,
$Y(t)\equiv0$ while
\begin{equation*}
X(t)=c(1+B(t)-N(t)).
\end{equation*}
Noting that $N(t)\geq0$ and the relation
\begin{equation*}
\{(t,\omega)\in[0,\tau]\times\Omega:B(t)<-1\}\subseteq\{(t,\omega)\in[0,\tau]\times\Omega:1+B(t)-N(t)<0\},
\end{equation*}
hence
\begin{equation*}
\mathbb{P}\{(t,\omega)\in[0,\tau]\times\Omega:1+B(t)-N(t)<0\}\geq\mathbb{P}\{(t,\omega)\in[0,\tau]\times\Omega:B(t)<-1\}>0,
\end{equation*}
since $B$ obeys the normal distribution. This, together with $c<0$,
yields
\begin{equation*}
\mathbb{P}\{(t,\omega)\in[0,\tau]\times\Omega:X(t,\omega)>0\}>0.
\end{equation*}}
\end{example}

Consequently, we can conclude that  comparison theorem need not hold
if the diffusion coefficient contains a delay function. What's more,
the following example shows if the jump coefficients are  not
increasing,
 the comparison theorem also need not hold.

\begin{example}\label{example}
{\rm Consider the following two one-dimensional equations
\begin{equation}\label{eq32}
\begin{cases}
X(t)&=c-2\int_0^tX(s-\tau)dN(s),t\in[0,T];\\
X(\theta)&=c,\theta\in[-\tau,0)
\end{cases}
\end{equation}
and
\begin{equation}\label{eq33}
\begin{cases}
Y(t)&=-2\int_0^tI_{\{Y(s-\tau)<0\}}Y(s-\tau)dN(s),t\in[0,T];\\
Y(\theta)&=0,\theta\in[-\tau,0),
\end{cases}
\end{equation}
where $c<0$ is a constant and $N$ is a Poisson process with
intensity $\lambda$.}
\end{example}
By Eq. \eqref{eq32} it is easy to see that for any $t\in[0,\tau]$
\begin{equation*}
X(t)=c(1 - 2N(t)).
\end{equation*}
In what follows we intend to show
\begin{equation}\label{eq34}
\mathbb{P}\{(t, \omega)\in (0,\tau]\times\Omega: X(t, \omega)>0\}>0.
\end{equation}
Indeed, noting that
\begin{equation*}
\{1-2N(t)<0\}=\{N(t)\geq1\},
\end{equation*}
we have
\begin{equation*}
\mathbb{P}\{1-2N(t)<0\}=1-e^{-\lambda t}>0 \mbox{ whenever } 0<t
<\tau,
\end{equation*}
which further gives \eqref{eq34}. Although
\begin{equation*}
-2y\leq-2yI_{\{y<0\}} \mbox{ and } c<0,
\end{equation*}
we can not deduce that
\begin{equation*}
X(t)\leq Y (t) \mbox{ a.s. }
\end{equation*}
due to \eqref{eq34} and $Y(t)\equiv0, t\in[0, T]$.

Based on the previous discussion, now we state a comparison theorem
for SDDEs driven by pure jump processes. In the proof, Lemma \ref{lemma1}
is used.

\begin{theorem}\label{comparison 1}
Consider two one-dimensional SDDEs with pure jumps for any
$t\in[0,T]$
\begin{equation}\label{eq16}
\begin{cases}
dX_1(t)&=f_1(X_1(t),X_1(t-\tau),t)dt+g(X_1(t),t)dW(t)\\
&+\int_{\mathbb{Y}}\gamma(X_1(t^-),X_1((t-\tau)^-),t,u)N(dt,du)\\
X_1(t)&=\xi_1(t),t\in[-\tau,0],
\end{cases}
\end{equation}
and
\begin{equation}\label{eq25}
\begin{cases}
dX_2(t)&=f_2(X_2(t),X_2(t-\tau),t)dt+g(X_2(t),t)dW(t)\\
&+\int_{\mathbb{Y}}\gamma(X_2(t^-),X_2((t-\tau)^-),t,u)N(dt,du)\\
X_2(t)&=\xi_2(t),t\in[-\tau,0].
\end{cases}
\end{equation}
Assume that there exists a constant $L>0$ such that for any
$x_1,x_2,y_1,y_2,x,y\in\mathbb{R}$
\begin{equation}\label{eq22}
\begin{split}
|f_i(x_1,y_1,t)-f_i(x_2,y_2,t)|^2+&\int_{\mathbb{Y}}|\gamma(x_1,y_1,t,u)-\gamma(x_2,y_2,t,u)|^2\lambda(du)\\
&\leq L(|x_1-x_2|^2+|y_1-y_2|^2)
\end{split}
\end{equation}
with $i=1,2$ and
\begin{equation}\label{eq23}
|g(x,t)-g(y,t)|^2\leq L|x-y|^2
\end{equation}
with property $\mathbb{E}\sup_{0\leq t\leq
T}\left(|f_i(0,0,t)|^2+|g(0,t)|^2+\int_{\mathbb{Y}}|\gamma(0,0,t,u)|^2\lambda(du)\right)<\infty$.
Assume further that for $x,y,z\in\mathbb{R}$
\begin{equation}\label{eq18}
f_1(x,y,t)\geq f_2(x,y,t)
\end{equation}
and
\begin{equation}\label{eq19}
x+\gamma(x,z,t,u)\leq y+\gamma(y,z,t,u) \mbox{ whenever } x\leq y.
\end{equation}
Moreover, we suppose that $f_2$ and $\gamma$ is nondecreasing with
respect to the second variable, that is, for $t\in[0,T]$ and fixed
$x\in\mathbb{R}$ and $u\in\mathbb{Y}$,
\begin{equation}\label{eq21}
f_2(x,y,t)\geq f_2(x,z,t) \mbox{ and }
\gamma(x,y,t,u)\geq\gamma(x,z,t,u) \mbox{ whenever } y\geq z.
\end{equation}
Then we have
\begin{equation*}
X_1(t)\geq X_2(t),t\in[0,T] \mbox{ a.s. provided that } \xi_1(t)\geq
\xi_2(t) \mbox{ with } t\in[-\tau,0].
\end{equation*}

\end{theorem}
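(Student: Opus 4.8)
The plan is to use the method of steps over the successive delay intervals and to apply Lemma \ref{lemma1} on each of them after freezing the delayed argument. Concretely, partition $[0,T]$ into the pieces $[k\tau,(k+1)\tau]$, $k=0,1,\dots,m-1$ with $m=\lceil T/\tau\rceil$, and prove the assertion by induction on $k$, the induction hypothesis being that $X_1(t)\ge X_2(t)$ for all $t\in[-\tau,k\tau]$ a.s.; the case $k=0$ is the initial ordering $\xi_1\ge\xi_2$. On each interval the delayed argument $X_i(t-\tau)$ has already been constructed on the previous interval, so Eq. \eqref{eq16} and Eq. \eqref{eq25} become non-delayed jump SDEs of the form \eqref{eq38}--\eqref{eq39}, with the common diffusion $g(x,t)$ (which carries no delay in the hypotheses, exactly as the examples require).

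For the base step, on $[0,\tau]$ we have $X_i(t-\tau)=\xi_i(t-\tau)$, so the frozen coefficients are $\tilde f_i(x,t):=f_i(x,\xi_i(t-\tau),t)$ and $\tilde\gamma_i(x,t,u):=\gamma(x,\xi_i((t-\tau)^-),t,u)$. I would verify the hypotheses of Lemma \ref{lemma1}: the Lipschitz bound \eqref{eq14} follows from \eqref{eq22}--\eqref{eq23}; the ordering \eqref{eq13} follows from
\begin{equation*}
\tilde f_1(x,t)=f_1(x,\xi_1(t-\tau),t)\ge f_2(x,\xi_1(t-\tau),t)\ge f_2(x,\xi_2(t-\tau),t)=\tilde f_2(x,t),
\end{equation*}
the first inequality being \eqref{eq18} and the second the monotonicity \eqref{eq21} of $f_2$ together with $\xi_1\ge\xi_2$, with the analogous chain giving $\tilde\gamma_1\ge\tilde\gamma_2$ via the monotonicity \eqref{eq21} of $\gamma$; the condition \eqref{eq15} for $\tilde\gamma_1$ is precisely \eqref{eq19} with $z=\xi_1((t-\tau)^-)$; and $X_1(0)=\xi_1(0)\ge\xi_2(0)=X_2(0)$. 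Lemma \ref{lemma1} then yields $X_1(t)\ge X_2(t)$ on $[0,\tau]$.

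For the inductive step, on $[k\tau,(k+1)\tau]$ the delayed argument ranges over $[(k-1)\tau,k\tau]$, where by hypothesis $X_1(t-\tau)\ge X_2(t-\tau)$ a.s. Setting $\hat f_i(x,t):=f_i(x,X_i(t-\tau),t)$ and $\hat\gamma_i(x,t,u):=\gamma(x,X_i((t-\tau)^-),t,u)$, the same three computations as above --- now using $X_1(t-\tau)\ge X_2(t-\tau)$ in place of $\xi_1\ge\xi_2$ --- give $\hat f_1\ge\hat f_2$, $\hat\gamma_1\ge\hat\gamma_2$ and the monotonicity \eqref{eq15}, while the starting point satisfies $X_1(k\tau)\ge X_2(k\tau)$. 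The required integrability of the frozen coefficients at $x=0$ follows from \eqref{eq22}--\eqref{eq23}, the assumed integrability of $f_i(0,0,t),g(0,t),\gamma(0,0,t,u)$, and the bound $\E\sup_{-\tau\le t\le T}|X_i(t)|^2<\infty$ from Lemma \ref{existence}. Applying the comparison to this interval propagates the inequality, and after $m$ steps we cover $[0,T]$.

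The main obstacle is that, in contrast to the base step, the frozen coefficients $\hat f_i,\hat\gamma_i$ are genuinely random, so Lemma \ref{lemma1} as literally stated (deterministic coefficients) does not apply. The point I would emphasize is that its proof extends verbatim to $\mathcal{F}_t$-adapted coefficient processes: the Tanaka-type formula, the sign conditions furnished by \eqref{eq13} and \eqref{eq15}, and the final Gronwall estimate for $\E(X_2(t)-X_1(t))^+$ are all used pointwise in $(\omega,t,u)$ or in expectation, and the c\`adl\`ag adaptedness of $t\mapsto X_i(t-\tau)$ keeps every stochastic integral well defined. Thus the only real content is to rerun that argument for adapted coefficients and to check, at each step, the three structural inequalities $\hat f_1\ge\hat f_2$, $\hat\gamma_1\ge\hat\gamma_2$ and \eqref{eq15}; the bookkeeping over the finitely many intervals is then routine.
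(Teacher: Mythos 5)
Your proof is correct, but it takes a genuinely different route from the paper. You use the method of steps: a finite induction over the intervals $[k\tau,(k+1)\tau]$, freezing each solution's \emph{own} delayed argument and comparing $X_1$ with $X_2$ directly on each interval via the chain $f_1(x,X_1(t-\tau),t)\ge f_2(x,X_1(t-\tau),t)\ge f_2(x,X_2(t-\tau),t)$ (and likewise for $\gamma$), which combines \eqref{eq18} with the monotonicity \eqref{eq21} and the induction hypothesis. The paper never compares $X_1$ and $X_2$ directly: it builds an auxiliary Picard-type sequence $X_3,X_4,\dots$, all with initial segment $\xi_2$, where $X_n$ solves \eqref{eq24} with the previous iterate inserted in the delay slot; Lemma \ref{lemma1} applied at each stage yields the monotone chain \eqref{eq26}, and then a contraction estimate (ratio $\tfrac12$ under the weighted norm $\|\cdot\|_{-\beta}$ with $\beta=5(L+L^{1/2}(1+\lambda(\mathbb{Y})^{1/2}))$) shows $(X_n)$ is Cauchy in $L^2_{\mathcal F_t}([0,T];\mathbb{R})$, after which the limit is identified with $X_2$ by uniqueness of solutions of \eqref{eq25}. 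What each buys: your induction is shorter and entirely elementary --- no contraction estimate, no limit identification --- because it exploits the discrete delay (for $t$ in the current interval, $t-\tau$ always points into an interval already settled); the paper's iteration makes no use of the step structure and would survive generalizations (variable or distributed delays, path-dependent coefficients) where the method of steps breaks down. Two remarks on the point you flag. First, your worry that Lemma \ref{lemma1} is stated for deterministic coefficients applies equally to the paper: in comparing $X_1$ with $X_3$, and each $X_n$ with $X_{n+1}$, the frozen coefficients such as $f_1(x,X_1(t-\tau),t)$ are random and adapted, and the paper invokes the lemma silently in that generality; your explicit check that the Tanaka--Gronwall argument goes through for adapted coefficients (deterministic uniform Lipschitz constant, pathwise sign conditions) is exactly what is needed and is sound. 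Second, unlike the paper, you also need the lemma with random, a.s.-ordered initial values $X_1(k\tau)\ge X_2(k\tau)$ in place of constants $x_1\ge x_2$; this is immediate from the same proof since $\mathbb{E}(X_2(k\tau)-X_1(k\tau))^+=0$, but it deserves a sentence, since the paper sidesteps it by always starting its iterates at time $0$ from $\xi_2$.
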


\begin{proof}
Under conditions \eqref{eq22} and \eqref{eq23}, both Eq.
\eqref{eq16} and Eq. \eqref{eq25} have unique solutions
$X_1(t),t\in[0,T]$ and $X_2(t),t\in[0,T]$, respectively. Now
consider SDDE with pure jumps for any $t\in[-\tau,T]$
\begin{equation}\label{eq20}
\begin{cases}
dX_3(t)&=f_2(X_3(t),X_1(t-\tau),t)dt+g(X_3(t),t)dW(t)\\
&+\int_{\mathbb{Y}}\gamma(X_3(t^-),X_1((t-\tau)^-),t,u)N(dt,du)\\
X_3(t)&=\xi_2(t),t\in[-\tau,0].
\end{cases}
\end{equation}
Noting by \eqref{eq18} that $f_1(x,X_1(t-\tau),t)\geq
f_2(x,X_1(t-\tau),t)$, together with $\xi_1(t)\geq \xi_2(t)$ for
$t\in[-\tau,0]$, we conclude by Lemma \ref{lemma1} that $X_1(t)\geq
X_3(t), t\in[-\tau,T],$ a.s. Next consider SDDE with pure jumps
\begin{equation*}
\begin{cases}
dX_4(t)&=f_2(X_4(t),X_3(t-\tau),t)dt+g(X_3(t),t)dW(t)\\
&+\int_{\mathbb{Y}}\gamma(X_4(t^-),X_3((t-\tau)^-),t,u)N(dt,du)\\
X_4(t)&=\xi_2(t),t\in[-\tau,0],
\end{cases}
\end{equation*}
which could be rewritten as
\begin{equation*}
\begin{cases}
dX_4(t)&=[f_2(X_4(t),X_1(t-\tau),t)+(f_2(X_4(t),X_3(t-\tau),t)-f_2(X_4(t),X_1(t-\tau),t))]dt\\
&+g(X_4(t),t)dW(t)
+\int_{\mathbb{Y}}[\gamma(X_4(t^-),X_1((t-\tau)^-),t,u)\\
&+(\gamma(X_4(t^-),X_3((t-\tau)^-),t,u)-\gamma(X_4(t^-),X_1((t-\tau)^-),t,u))]N(dt,du)\\
X_3(t)&=\xi_2(t),t\in[-\tau,0].
\end{cases}
\end{equation*}
Recalling $X_1(t)\geq X_3(t), t\in[-\tau,T]$ a.s., by \eqref{eq21}
it follows that
\begin{equation*}
f_2(x,X_1(t-\tau),t)\geq f_2(x,X_3(t-\tau),t) \mbox{ and
}\gamma(x,X_1((t-\tau)^-),t,u)\geq\gamma(x,X_3((t-\tau)^-),t,u).
\end{equation*}
Again by Lemma \ref{lemma1} $X_3(t)\geq X_4(t), t\in[-\tau,T]$ a.s.
In what follows, repeating the previous procedure we can get the
sequence
\begin{equation}\label{eq26}
X_1(t)\geq X_3(t)\geq X_4(t)\geq X_5(t)\geq\cdots\geq
X_n(t)\geq\cdots \mbox{ a.s. },
\end{equation}
where $X_n(t)$ satisfies the following equation
\begin{equation}\label{eq24}
\begin{cases}
dX_n(t)&=f_2(X_n(t),X_{n-1}(t-\tau),t)dt+g(X_n(t),t)dW(t)\\
&+\int_{\mathbb{Y}}\gamma(X_n(t^-),X_{n-1}((t-\tau)^-),t,u)N(dt,du)\\
X_n(t)&=\xi_2(t),t\in[-\tau,0].
\end{cases}
\end{equation}
In what follows we intend to show that $X_n(t)$ is a Cauchy
sequence, which has a unique limit $X(t)$, and $X(t)=X_2(t),
t\in[0,T]$, giving the desired assertion. Denote by $L_{\mathcal
{F}_t}^2([0,T];\mathbb{R})$, the space of $\mathbb{R}$-valued and
$\mathcal {F}_t$-adapted stochastic processes with
$\mathbb{E}\int_0^T|\varphi(t)|^2dt<\infty$, equipped with the norm
\begin{equation*}
\|v\|_{-\beta}:=\left(\mathbb{E}\int_0^T|v(s)|^2e^{-\beta
s}ds\right)^{\frac{1}{2}},
\end{equation*}
where $\beta$ is a positive constant to be determined. Obviously,
the norm $\|v\|_{-\beta}$ is equivalent to the original one
$\|v\|:=\mathbb{E}\int_0^T|\varphi(t)|^2dt$ for $v\in L_{\mathcal
{F}_t}^2([0,T];\mathbb{R})$. For simplicity, set
$\bar{X}_n(t):=X_n(t)-X_{n-1}(t), n\geq4$. Applying It\^o's formula
we find for any $t\in[0,T]$
\begin{equation*}
\begin{split}
\mathbb{E}&(e^{-\beta
t}|\bar{X}_n(t)|^2)\\&=\mathbb{E}\int_0^t-\beta e^{-\beta
s}|\bar{X}_n(s)|^2ds+\mathbb{E}\int_0^te^{-\beta
s}[2\bar{X}_n(s)(f_2(X_n(s),X_{n-1}(s-\tau),s)\\
&-f_2(X_{n-1}(s),X_{n-2}(s-\tau),s))+|g(X_n(s),s)-g(X_{n-1}(s),s)|^2]ds\\
&+\mathbb{E}\int_0^t\int_{\mathbb{Y}}e^{-\beta
s}\Big[2\bar{X}_n(s)\Big(\gamma(X_n(s^-),X_{n-1}((s-\tau)^-),s,u)\\
&-\gamma(X_{n-1}(s^-),X_{n-2}((s-\tau)^-),s,u)\Big)\\
&+\Big|\gamma(X_n(s^-),X_{n-1}((s-\tau)^-),s,u)
-\gamma(X_{n-1}(s^-),X_{n-2}((s-\tau)^-),s,u)\Big|^2\Big]N(ds,du).
\end{split}
\end{equation*}
This, together with \eqref{eq22} and \eqref{eq23}, yields that
\begin{equation*}
\begin{split}
\mathbb{E}(e^{-\beta
t}|\bar{X}_n(t)|^2)&\leq\mathbb{E}\int_0^t-\beta e^{-\beta
s}|\bar{X}_n(s)|^2ds+\mathbb{E}\int_0^te^{-\beta
s}[2L|\bar{X}_n(s)|^2+L|\bar{X}_{n-1}(s)|^2\\
&+2L^{\frac{1}{2}}(1+(\lambda(\mathbb{Y}))^{\frac{1}{2}})|\bar{X}_n(s)|(|\bar{X}_n(s)|+|\bar{X}_{n-1}(s)|)]ds\\
&\leq(-\beta+2L+3L^{\frac{1}{2}}(1+(\lambda(\mathbb{Y}))^{\frac{1}{2}}))\mathbb{E}\int_0^te^{-\beta
s}|\bar{X}_n(s)|^2ds\\
&+(L+L^{\frac{1}{2}}(1+(\lambda(\mathbb{Y}))^{\frac{1}{2}}))\mathbb{E}\int_0^te^{-\beta
s}|\bar{X}_{n-1}(s)|^2ds.
\end{split}
\end{equation*}
Letting
\begin{equation*}
\beta=5(L+L^{\frac{1}{2}}(1+(\lambda(\mathbb{Y}))^{\frac{1}{2}})),
\end{equation*}
we then have
$-\beta+2L+3L^{\frac{1}{2}}(1+(\lambda(\mathbb{Y}))^{\frac{1}{2}})<0$
and
\begin{equation*}
(L+L^{\frac{1}{2}}(1+(\lambda(\mathbb{Y}))^{\frac{1}{2}}))/(\beta-(2L+3L^{\frac{1}{2}}(1+(\lambda(\mathbb{Y}))^{\frac{1}{2}})))=\frac{1}{2}.
\end{equation*}
Hence
\begin{equation*}
\mathbb{E}\int_0^te^{-\beta
s}|\bar{X}_n(s)|^2ds\leq\frac{1}{2}\mathbb{E}\int_0^te^{-\beta
s}|\bar{X}_{n-1}(s)|^2ds,
\end{equation*}
which implies by induction arguments that
\begin{equation*}
\mathbb{E}\int_0^te^{-\beta
s}|\bar{X}_n(s)|^2ds\leq\frac{1}{2^{n-4}}\mathbb{E}\int_0^te^{-\beta
s}|\bar{X}_4(s)|^2ds.
\end{equation*}
Therefore $\bar{X}_n(t)$ is a Cauchy sequence in $L_{\mathcal
{F}_t}^2([0,T];\mathbb{R})$ with the norm $\|\cdot\|_{-\beta}$ and
has a unique limit denoted by $X(t)\in L_{\mathcal
{F}_t}^2([0,T];\mathbb{R})$ since the space $L_{\mathcal
{F}_t}^2([0,T];\mathbb{R})$ is a complete norm space under the norm
$\|\cdot\|_{-\beta}$. Next we show $X_2(t)=X(t)$ by the uniqueness.
In fact, by \eqref{eq22}
\begin{equation*}
\begin{split}
\mathbb{E}&\int_0^Te^{-\beta
t}\left|\int_0^t[f_2(X_n(s),X_{n-1}(s-\tau),s)-f_2(X(s),X(s-\tau),s)]ds\right|^2dt\\
&\leq LT\mathbb{E}\int_0^T\int_0^te^{-\beta(
t-s)}e^{-\beta s}(|X_n(s)-X(s)|^2+|X_{n-1}(s)-X(s)|^2)dsdt\\
&\leq LT^2\mathbb{E}\int_0^Te^{-\beta s}(|X_n(s)-X(s)|^2+|X_{n-1}(s)-X(s)|^2)ds\\
 &\rightarrow0 \mbox{ as } n\rightarrow\infty,
\end{split}
\end{equation*}
and, according to It's isometry
\begin{equation*}
\begin{split}
\mathbb{E}&\int_0^Te^{-\beta
t}\left|\int_0^t\int_{\mathbb{Y}}[\gamma(X_n(s),X_{n-1}(s-\tau),s,u)-\gamma(X(s),X(s-\tau),s,u)]N(ds,du)\right|^2dt\\
&\leq C\mathbb{E}\int_0^Te^{-\beta
t}\int_0^t\int_{\mathbb{Y}}|\gamma(X_n(s),X_{n-1}(s-\tau),s,u)-\gamma(X(s),X(s-\tau),s,u)|^2\lambda(du)dsdt\\
&\leq LCT\mathbb{E}\int_0^Te^{-\beta s}(|X_n(s)-X(s)|^2+|X_{n-1}(s)-X(s)|^2)ds\\
 &\rightarrow0 \mbox{ as } n\rightarrow\infty,
\end{split}
\end{equation*}
where $C:=2(1+T\lambda(\mathbb{Y}))$ and, carrying out the previous
arguments,
\begin{equation*}
\mathbb{E}\int_0^Te^{-\beta
t}\left|\int_0^t[g(X_n(s),s)-g(X(s),s)]dW(s)\right|^2dt\rightarrow0
\mbox{ as } n\rightarrow\infty.
\end{equation*}
As a consequence, we could conclude that $X$ satisfies
\begin{equation*}
\begin{cases}
dX(t)&=f_2(X(t),X(t-\tau),t)dt+g(X(t),t)dW(t)\\
&+\int_{\mathbb{Y}}\gamma(X(t^-),X((t-\tau)^-),t,u)N(dt,du)\\
X(t)&=\xi_2(t),t\in[-\tau,0].
\end{cases}
\end{equation*}
By the uniqueness of solution of Eq. \eqref{eq25} we conclude that
$X(t)=X_2(t)$ and, recalling \eqref{eq26}, the desired assertion is
complete.
\end{proof}

\begin{remark}
{\rm \cite{ymy08} established an example to show that condition
\eqref{eq18} is vital for the comparison theorem for SDDEs. By
Example \ref{example}, we could conclude that, if the jump diffusion
$\gamma$ is nonincreasing in second variable, namely, delay term,
the comparison theorem might not be available. Therefore the
condition \eqref{eq21} is natural. With respect to \eqref{eq19}, we
can refer to Situ \cite{st05} and Peng and Zhu \cite{pz06} for more
details for SDEs with jumps.}
\end{remark}

\begin{remark}
{\rm  By carrying out the technique of stopping times, the derived
comparison theorem can be generalized to the case where Lipschitz
condition is replaced by the Carath\'{e}odory-type condition
\cite{st05}.

}
\end{remark}

\section{Comparison Theorem for SDDEs with Compensator Jump Processes }
In the last section we establish the comparison theorem for
 SDDEs with pure jump processes. To make the content
more comprehensive, in this part we aim to discuss the comparison
problems for SDDEs with compensator jump process.

Consider two one-dimensional SDDEs with jumps for any $t\in[0,T]$
\begin{equation}\label{eq27}
\begin{cases}
dX_1(t)&=f_1(X_1(t),X_1(t-\tau),t)dt+g(X_1(t),t)dW(t)\\
&+\int_{\mathbb{Y}}\gamma(X_1(t^-),X_1((t-\tau)^-),t,u)\tilde{N}(dt,du)\\
X_1(t)&=\xi_1(t),t\in[-\tau,0],
\end{cases}
\end{equation}
and
\begin{equation}\label{eq28}
\begin{cases}
dX_2(t)&=f_2(X_2(t),X_2(t-\tau),t)dt+g(X_2(t),t)dW(t)\\
&+\int_{\mathbb{Y}}\gamma(X_2(t^-),X_2((t-\tau)^-),t,u)\tilde{N}(dt,du)\\
X_2(t)&=\xi_2(t),t\in[-\tau,0].
\end{cases}
\end{equation}
Noting that $\tilde{N}(dt,du)=N(dt,du)-\lambda(du)dt$, Eq.
\eqref{eq27} and Eq. \eqref{eq28} are equivalent to
\begin{equation}\label{eq29}
\begin{cases}
dX_1(t)&=\Big[f_1(X_1(t),X_1(t-\tau),t)-\int_{\mathbb{Y}}\gamma(X_1(t^-),X_1((t-\tau)^-),t,u)\lambda(du)\Big]dt\\
&+g(X_1(t),t)dW(t)+\int_{\mathbb{Y}}\gamma(X_1(t^-),X_1((t-\tau)^-),t,u)N(dt,du)\\
X_1(t)&=\xi_1(t),t\in[-\tau,0],
\end{cases}
\end{equation}
and
\begin{equation}\label{eq30}
\begin{cases}
dX_2(t)&=\Big[f_2(X_2(t),X_2(t-\tau),t)-\int_{\mathbb{Y}}\gamma(X_2(t^-),X_2((t-\tau)^-),t,u)\lambda(du)\Big]dt\\
&+g(X_2(t),t)dW(t)+\int_{\mathbb{Y}}\gamma(X_2(t^-),X_2((t-\tau)^-),t,u)N(dt,du)\\
X_2(t)&=\xi_2(t),t\in[-\tau,0],
\end{cases}
\end{equation}
respectively.

Based on the comparison theorem, Theorem \ref{comparison 1}, we
could derive the following comparison results for stochastic delay
systems with compensator jump processes.

\begin{theorem}\label{comparison2}
Let conditions \eqref{eq22}-\eqref{eq19} hold. Moreover, we suppose
that $f_2-\gamma$ and $\gamma$ is non-decreasing with respect to the
second variable, that is, for $t\in[0,T]$ and fixed $x\in\mathbb{R}$
and $u\in\mathbb{Y}$,
\begin{equation}\label{eq30}
f_2(x,y,t)-\int_{\mathbb{Y}}\gamma(x,y,t,u)\lambda(du)\geq
f_2(x,z,t)-\int_{\mathbb{Y}}\gamma(x,z,t,u)\lambda(du)
\end{equation}
and
\begin{equation}\label{eq31}
\gamma(x,y,t,u)\geq\gamma(x,z,t,u)
\end{equation}
whenever $y\geq z$. Then we have
\begin{equation*} X_1(t)\geq
X_2(t),t\in[0,T] \mbox{ a.s. provided that } \xi_1(t)\geq \xi_2(t)
\mbox{ with } t\in[-\tau,0].
\end{equation*}
\end{theorem}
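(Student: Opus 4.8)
The plan is to reduce the compensator-jump problem to the pure-jump comparison theorem already established in Theorem \ref{comparison 1}. As observed in the text, equations \eqref{eq27} and \eqref{eq28} are equivalent to the pure-jump equations \eqref{eq29} and the pure-jump form of \eqref{eq28}, in which the new drift coefficients are
\begin{equation*}
\hat{f}_i(x,y,t):=f_i(x,y,t)-\int_{\mathbb{Y}}\gamma(x,y,t,u)\lambda(du),\quad i=1,2,
\end{equation*}
while the diffusion coefficient $g$ and the jump coefficient $\gamma$ remain unchanged. Thus it suffices to verify that the triples $(\hat{f}_1,g,\gamma)$ and $(\hat{f}_2,g,\gamma)$ satisfy all the hypotheses of Theorem \ref{comparison 1}, and then invoke that theorem directly.

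First I would check the regularity conditions. Since $\lambda(\mathbb{Y})<\infty$, the Cauchy--Schwarz inequality gives $\left(\int_{\mathbb{Y}}|\gamma(x_1,y_1,t,u)-\gamma(x_2,y_2,t,u)|\lambda(du)\right)^2\leq\lambda(\mathbb{Y})\int_{\mathbb{Y}}|\gamma(x_1,y_1,t,u)-\gamma(x_2,y_2,t,u)|^2\lambda(du)$, so the Lipschitz bound \eqref{eq22} on $f_i$ and $\gamma$ transfers to a Lipschitz bound for $\hat{f}_i$; together with \eqref{eq23} this recovers the regularity assumptions of Theorem \ref{comparison 1} and guarantees existence and uniqueness of the solutions.

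Next comes the order structure, which is the heart of the matter. Condition \eqref{eq18}, $f_1\geq f_2$, yields $\hat{f}_1-\hat{f}_2=f_1-f_2\geq0$ because the subtracted integral term involves the \emph{same} $\gamma$; hence the counterpart of \eqref{eq18} holds for the transformed drifts. Condition \eqref{eq19} involves only $\gamma$ and is therefore literally unchanged. Finally, the monotonicity hypothesis \eqref{eq21} required by Theorem \ref{comparison 1}---namely that the drift and the jump coefficient be nondecreasing in the delay variable---reads, for the transformed system, exactly as $\hat{f}_2(x,y,t)\geq\hat{f}_2(x,z,t)$ and $\gamma(x,y,t,u)\geq\gamma(x,z,t,u)$ whenever $y\geq z$. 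These are precisely the new assumptions \eqref{eq30} and \eqref{eq31} of the present theorem.

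With all hypotheses verified, I would apply Theorem \ref{comparison 1} to equation \eqref{eq29} and the pure-jump form of \eqref{eq28} to conclude $X_1(t)\geq X_2(t)$ for $t\in[0,T]$ almost surely whenever $\xi_1(t)\geq\xi_2(t)$ on $[-\tau,0]$. The main subtlety---and the reason the monotonicity assumption must be stated as \eqref{eq30} rather than as plain monotonicity of $f_2$---is that absorbing the compensator into the drift changes which monotonicity is needed: it is the combination $f_2-\int_{\mathbb{Y}}\gamma\,\lambda(du)$, not $f_2$ alone, that must be nondecreasing in the delay argument. Once this bookkeeping is handled correctly, the result is an immediate corollary of the pure-jump theorem and requires no new estimates.
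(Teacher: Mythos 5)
Your proposal is correct and takes essentially the same route as the paper, which likewise treats the theorem as a direct corollary of Theorem \ref{comparison 1} after rewriting \eqref{eq27} and \eqref{eq28} in the equivalent pure-jump form \eqref{eq29}--\eqref{eq30}, with hypotheses \eqref{eq30} and \eqref{eq31} playing exactly the role of \eqref{eq21} for the shifted drift $f_2-\int_{\mathbb{Y}}\gamma(\cdot,\cdot,t,u)\,\lambda(du)$. Your explicit verification that the Lipschitz, ordering, and monotonicity conditions transfer to the transformed coefficients is precisely the bookkeeping the paper leaves implicit.
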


\begin{example}
Consider two one-dimensional SDDEs with jumps
\begin{equation*}
\begin{cases}
dX_1(t)&=f_1(X_1(t),X_1(t-\tau),t)dt+g(X_1(t),t)dW(t)\\
&+\int_{\mathbb{Y}}\rho(u)f_2(X_1(t),X_1(t-\tau),t)\tilde{N}(dt,du)\\
X_1(t)&=\xi_1(t),t\in[-\tau,0],
\end{cases}
\end{equation*}
and
\begin{equation*}
\begin{cases}
dX_2(t)&=f_2(X_2(t),X_2(t-\tau),t)dt+g(X_2(t),t)dW(t)\\
&+\int_{\mathbb{Y}}\rho(u)f_2(X_2(t),X_2(t-\tau),t)\tilde{N}(dt,du)\\
X_2(t)&=\xi_2(t),t\in[-\tau,0],
\end{cases}
\end{equation*}
where $f_1, f_2, g$ satisfy the conditions \eqref{eq22}-\eqref{eq18}
and $\xi_1(t)\geq \xi_2(t), t\in[-\tau,0]$.

{\rm In what follows, we further assume that $\rho>0$,
$\int_{\mathbb{Y}}\rho(u)\lambda(du)<1$ and
\begin{equation}\label{eq43}
f_2(x,y,t)\geq f_2(x,z,t) \mbox{ whenever } y\geq z.
\end{equation}
By Theorem \ref{comparison2}, to show that $X_1(t)\geq X_2(t),
t\in[-\tau,T]$, a.s.,  it is sufficient to check conditions
\eqref{eq19}, \eqref{eq30} and \eqref{eq31}. By \eqref{eq43} and
$\rho>0$, it is easy to see that conditions \eqref{eq19} and
\eqref{eq31} hold. On the other hand, recalling
$\int_{\mathbb{Y}}\rho(u)\lambda(du)<1$, we have
\begin{equation*}
f_2(x,y,t)-\int_{\mathbb{Y}}\rho(u)\lambda(du)f_2(x,y,t)=\left(1-\int_{\mathbb{Y}}\rho(u)\right)f_2(x,y,t),
\end{equation*}
and, combining \eqref{eq43}, condition \eqref{eq30} is also true.}
\end{example}

\end{document}